\documentclass[11pt,a4paper]{article}
\usepackage{latexsym} 
\usepackage{amsmath,amssymb,amsfonts,amsthm}
\usepackage{verbatim}
\usepackage[dvips]{graphicx}
\usepackage{textcomp}

\newtheorem{theorem}{Theorem}
\newtheorem{proposition}{Proposition}
\newtheorem{corollary}{Corollary}
\newtheorem{definition}{Definition}
\newtheorem{lemma}{Lemma}

\newcommand{\mP }{\mathbb{P}}
\newcommand{\me}{\mathbb{E}}
\newcommand{\mn}{\mathbb{N}}
\newcommand{\mr }{ \mathbf{R}}
\newcommand{\dive}{\textrm{div}_E }
\newcommand{\grade}{\textrm{grad}_E }
\newcommand{\grad}{\textrm{grad}}

\newcommand{\supp}{\textrm{supp} }
\newcommand{\F}{\mathcal{F}}
\newcommand{\bbl}{\mathcal{B}^L_b}

\addtolength{\textheight}{1in} \addtolength{\oddsidemargin}{-20pt}
\addtolength{\evensidemargin}{-20pt}
\addtolength{\textwidth}{40pt} \addtolength{\topmargin}{-.75in}

\begin{document}
\begin{center}


 {\Large {\bf Harmonic measures in embedded foliated manifolds  }}

\end{center}

\vspace{0.1cm}

\begin{center}
{\large { Pedro J. Catuogno}\footnote{E-mail:
pedrojc@ime.unicamp.br. 
Research partially supported by CNPq 302.704/2008-6, 480.271/2009-7
and FAPESP 07/06896-5.} \ \ \ \ \ \ \ \ \ \ \ \ \  Diego S.
Ledesma\footnote{E-mail:  Research
supported by CNPQ, grant no. 142655/2005-8}

\bigskip

{ Paulo R. Ruffino}\footnote{Corresponding author, e-mail:
ruffino@ime.unicamp.br.
Research partially supported by CNPq 306.264/2009-9, 480.271/2009-7
and FAPESP 07/06896-5.}}

\vspace{0.2cm}

\textit{Departamento de Matem\'{a}tica, Universidade Estadual de Campinas, \\
13.083-859- Campinas - SP, Brazil.}

\end{center}

\begin{abstract}

We study harmonic and totally invariant measures in a foliated compact Riemannian
 manifold isometrically embedded in an Euclidean space. We introduce geometrical techniques 
for stochastic calculus
in this space. In particular, using these
techniques we can construct explicitely an Stratonovich equation for the
foliated Brownian motion (cf. L. Garnett \cite{LG} and others). We present a
characterization of totally invariant measures in terms of the flow of
diffeomorphisms of associated to this equation. We prove an ergodic formula
for the sum of the Lyapunov exponents in terms of the geometry of the leaves.


\end{abstract}

\noindent {\bf Key words:} foliated manifold, Brownian motion, stochastic flows
of diffeomorphisms.

\vspace{0.3cm}
\noindent {\bf MSC2010 subject classification:} 58J65, 53C12 (60H30, 60J60).

\vspace{8mm}

\section{Introduction}

\bigskip

The main topic of this article is to study harmonic and totally invariant measures 
in a foliated compact Riemannian manifold $M$ isometrically embedded in
an Euclidean space. Our technique offers some tools for the geometrical analysis
of stochastic processes in $M$, in particular, it allows one to
construct explicitely an Stratonovich equation for the foliated Brownian motion
(as introduced in the literature by L. Garnett \cite{LG}, see also
\cite{candel}, \cite{candel2} and
references therein).

Next section introduces the geometrical background, in particular we present the
tension $\kappa$ in the tangent of the leaves which measures the difference
between the divergent operator in $M$ and the divergent operator in the leaves.
Among others interesting properties, we prove that  $\kappa$ is related to
the Gobillon-Vey class of the foliated space (Proposition  \ref{prop-Gb-V}). In
Section 3 we put these geometrical tools to play with invariant measures:
harmonic, totally invariant and holonomy invariant measures (when they exist).
We present a characterization of totally invariant measures in terms of the flow
of diffeomorphisms of our foliated Brownian system (Theorem
\ref{thm_carac_mti}), in the proof we use currents. Ergodic properties appears in the last section, where the
main result is an ergodic formula for the sum of the Lyapunov exponents in terms
of the geometry of the leaves: the tensor $\kappa$ and the mean curvature $H$
(Theorem \ref{mean lyap exp fobm}).


\section{Foliated Geometry}\label{folgeo}

We fix $(M, \mathcal{F}, g)$ a foliated Riemannian manifold without boundary.
The foliation $\mathcal{F}$ is
given by the integrable subbundle $E$ of tangent vectors to $\mathcal{F}$.

We are interested in describing the foliated operators over $(M,\mathcal{F},g)$
in terms of an isometric embedding of the Riemannian manifold $(M,g)$ in an
Euclidean space $\mr^N$ as guaranteed by the classical Nash theorem. 
Let $P:T\mr^N|_M\rightarrow TM$ be defined by $P(m,v)=P(m)v, $ where
$P(m):\mr^N\rightarrow T_mM$ is the orthogonal projection.
So, the Riemannian connection $\nabla$ of $M$ can be written as
\[
\nabla_VY=PdY(V),
\]
for all sections $V$ and $Y$ in $TM$. 

The connection $\nabla^E$ is defined on $E$ in terms of the orthogonal
projection $\pi:TM\rightarrow E$ by
\[
\nabla^E_VY=\pi \nabla_VY 
\]
for all sections  $V\in TM$ and $Y\in E$.

The elementary differential operators we are going to deal with in the calculus
in a foliated
manifolds are the following (cf. \cite{NA}):
\begin{definition}\label{z2}
Let $f$ be a smooth function and $X,\: Y$ sections of $E$, we define the
operators

\noindent \textbf{a)} foliated gradient $\grade\:f=\pi (\grad~ 
f)$;

\noindent \textbf{b)} foliated divergence
$\textrm{div}_E\:Y=Tr_E~g(\nabla_\cdot^EY,\cdot),$ where
$Tr_E$ is the trace on $E$;

\noindent \textbf{c)} foliated Hessian 
$\textrm{Hess}_E(f)(X,Y)=XY(f)-\nabla^E_XYf$;

\noindent \textbf{d)} foliated Laplacian
$\Delta_Ef=div_E(grad_E\:f) $.
\end{definition}
\noindent The restriction to a leaf of the
operators $\grade$, $\textrm{div}_E$ and $\Delta_E$ are the
corresponding operators on the leaf with the induced metric. The following lemma
is a natural consequence of the isometric embedding of $M$
into $\mr^N$. In particular it gives a 
description of the foliated Laplacian $\Delta_E$ as a sum of squares of vector
fields over $M$. This will be useful to get the foliated Brownian motion as a
solution of a Stratonovich stochastic differential equation.

Let $\{e_i:~i=1,\ldots ,N\}$ be an orthonormal basis  of $\mr^N$. We denote by
$\tilde X_i$ the gradient vector field $Pe_i$\
and by $X_i$ the foliated gradient vector fields $\pi\tilde X_i$.

\begin{lemma}\label{ab}Let $f$ be a smooth function and $X$ a section of $E$.
Then 
\begin{itemize}
\item[a)] $\grade\:f=\sum_{i=1}^N X_if\:X_i,$
\item[b)] div$_E(X)=\sum_{i=1}^Ng(\nabla^E_{X_i}X,X_i),$
\item[c)] $\Delta_Ef=\sum_{i=1}^N X_i^2f$.
\end{itemize}
\end{lemma}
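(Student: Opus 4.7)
The starting observation is that for each $m\in M$, the composition $\Pi(m):=\pi(m)\circ P(m):\mr^N\to E_m$ is the orthogonal projection onto $E_m$, because $E_m\subset T_mM\subset\mr^N$ is a chain of subspaces and the projection onto the innermost one is the composition of the successive orthogonal projections. In particular $\Pi^2=\Pi$, $\Pi^T=\Pi$, and $X_i=\Pi e_i$ by the very definition of the $X_i$. All three identities of the lemma will be expressed as manipulations of $\Pi$ against the basis $\{e_i\}$, using only these algebraic properties.

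For (a), I would pair both sides with an arbitrary $Y\in E$. Using $X_i(f)=df(\Pi e_i)$, $\langle Y,\Pi e_i\rangle=\langle Y,e_i\rangle$ (by $\Pi^T=\Pi$ and $\Pi Y=Y$), and $Y=\sum_i\langle Y,e_i\rangle e_i$, the right side collapses to $df(\Pi Y)=df(Y)=Y(f)=\langle Y,\grade f\rangle$. For (b), I would view $T(U,V):=g(\nabla^E_U X,V)$ as a bilinear form on $E$ and exploit the operator identity $\sum_i\Pi e_i\otimes\Pi e_i=\Pi$; equivalently, complete an orthonormal frame $\{\varepsilon_\alpha\}$ of $E$ to an orthonormal basis of $\mr^N$ by vectors in $E^{\perp}$, on which $\Pi$ vanishes, to reduce $\sum_i T(\Pi e_i,\Pi e_i)$ to $Tr_E T=\dive X$.

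The substantial part is (c). Combining (a) with the Leibniz rule $\dive(\varphi Y)=\varphi\,\dive Y+Y(\varphi)$ gives
\[
\Delta_E f=\dive\grade f=\sum_i X_i^2 f+\sum_i X_i(f)\,\dive X_i,
\]
so the conclusion reduces to vanishing of the residual sum. Since $X_i(f)=g(\grade f,X_i)$, this is equivalent to the vector field identity $\sum_i(\dive X_i)X_i=0$. The plan is to compute this field using the embedding: from $\nabla_V Y=P\,dY(V)$ with $Y=X_i=\Pi e_i$ one gets $\nabla^E_V X_i=\Pi\,d\Pi(V)e_i$, whence by (b), $\dive X_i=\sum_\alpha\langle d\Pi(\varepsilon_\alpha)e_i,\varepsilon_\alpha\rangle$; summing in $i$ and using $d\Pi(V)^T=d\Pi(V)$ (differentiate $\Pi^T=\Pi$) yields
\[
\sum_i(\dive X_i)X_i=\sum_\alpha\Pi\,d\Pi(\varepsilon_\alpha)\varepsilon_\alpha.
\]
The main obstacle is the final algebraic identity $\Pi\,d\Pi(V)\varepsilon=0$ for $\varepsilon\in E$: differentiating $\Pi\varepsilon=\varepsilon$ in direction $V$ gives $d\Pi(V)\varepsilon=(I-\Pi)d\varepsilon(V)$, and left-composition with $\Pi$ annihilates the right hand side, closing the argument.
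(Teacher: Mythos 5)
Your proof is correct. Parts (a) and (b) follow essentially the paper's own argument: the identity $\sum_{i=1}^N X_i\otimes X_i=\sum_{\alpha}u_\alpha\otimes u_\alpha$ that the paper contracts with $df$ and with $g(\nabla^E_\cdot X,\cdot)$ is exactly your observation that $\Pi=\pi\circ P$ is the orthogonal projection of $\mr^N$ onto $E_m$, and both arguments reduce the sums over $i=1,\dots,N$ to traces over $E$ via an adapted basis. For (c) you take a genuinely different reduction. The paper writes $\Delta_E f=\sum_i g(\nabla^E_{X_i}\grade f,X_i)$, uses metric compatibility, and is left with the vector identity $\sum_i\nabla^E_{X_i}X_i=0$, proved with the projectors $\tilde P,\tilde Q$ and a rather terse trace/invariance argument in the $e_i$. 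You instead expand $\dive(\sum_i X_i(f)X_i)$ by the Leibniz rule, which leaves the different (also true) identity $\sum_i\dive(X_i)\,X_i=0$; this is precisely formula (\ref{formula 2}) of Corollary \ref{formulas_H_X_i}, which the paper obtains later as a consequence of the mean-curvature Lemma \ref{suma de divergencias nula}, whereas you prove it directly from $\nabla^E_VX_i=\Pi\,d\Pi(V)e_i$, the symmetry of $d\Pi(V)$, and $\Pi\,d\Pi(V)\Pi=0$. So your route yields a self-contained proof of formula (\ref{formula 2}) without the second fundamental form, and replaces the paper's ``invariance with respect to the $e_i$'' step by an explicit algebraic cancellation; the paper's route keeps the auxiliary fact at the level of $\sum_i\nabla^E_{X_i}X_i=0$ and defers formula (\ref{formula 2}) to the geometric discussion of $H$. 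Both rest on the same embedded projector calculus, and each step of yours (the Leibniz rule for $\dive$ on sections of $E$, the reduction of the residual term to $g(\grade f,\sum_i\dive(X_i)X_i)$, and the final vanishing) checks out.
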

\begin{proof}
 We first observe that
\begin{eqnarray}
\sum_{i=1}^NX_i(m)\otimes X_i(m)=\sum_{i=1}^pu_i\otimes u_i,\label{ap1}
\end{eqnarray}
where the $\{u_i\}$ is an orthonormal basis of $E_m$.

Item (a) follows immediately from definition and the contraction of Equation
(\ref{ap1}) with $df$.

For item (b) note that 
\begin{eqnarray*}
\textrm{div}_E(Y)&=&g(\nabla_\cdot^EY,\cdot)\left(\sum_{i=1}^pu_i\otimes
u_i\right)\\
&=&g(\nabla_\cdot^EY,\cdot)\left(\sum_{i=1}^NX_i(m)\otimes X_i(m)\right)\\
&=&\sum_{i=1}^Ng(\nabla^E_{X_i}X,X_i).
\end{eqnarray*}

For item (c): By equation  (\ref{ap1}), we have that
\begin{eqnarray*}
\Delta_Ef&=&\sum_{i=1}^Ng(\nabla^E_{X_i}\grade f,X_i)\\
&=&\sum_{i=1}^NX_ig(\grade f,X_i)-g(\grade f,\sum_{i=1}^N\nabla^E_{X_i}X_i).
\end{eqnarray*}
Therefore we just need to prove that $\sum_{i=1}^N\nabla^E_{X_i}X_i=0$. In order
to do this we
define the projectors
\[
 \tilde P=\pi\circ P\hspace{1cm} \tilde Q=I_{\mr^N}-\tilde P.
\]
Then $\tilde P\circ\tilde Q=\tilde Q\circ\tilde P=0$. Using that
\[\tilde P\circ d\tilde P=-\tilde P\circ d\tilde Q= d\tilde P \circ\tilde Q,\]
we obtain
\begin{eqnarray*}
\sum_{i=1}^N\nabla^E_{X_i}X_i&=&\sum_{i=1}^N \pi Pd\tilde P(X_i)(e_i)\\
&=&\sum_{i=1}^N \tilde P d\tilde P(\tilde P(e_i))(e_i)\\
&=&\sum_{i=1}^N d\tilde P(\tilde P(e_i))\tilde Qe_i.
\end{eqnarray*}
By the invariance of this expression with respect to the vectors $e_i$, 
we have that $d\tilde P(\tilde P(e_i))\tilde Q e_i=0$, showing the result.
 
\end{proof}

%

\begin{definition}
We define the tension $\kappa$ as the unique section of $E$ such that
\[
g(\kappa,X)=\textrm{div}_E(X)-\textrm{div}(X).
\]
for all section $X$ in $E$. 
\end{definition}


  Suppose that there exists a 1-form $\omega$ in $M $,
$||\omega||=1$ determining a
transversaly oriented codimension 1 foliation by $E=\textrm{Ker}(\omega)$, i.e.
form $\omega$ satisfies $\omega \wedge d \omega =0$.  
The integrability of $E$ guarantees the existence of a 1-form $\alpha$ such that
$d\omega=\alpha\wedge\omega$. 
The  1-form $\alpha$ determines the Godbillon-Vey class 
of the foliation by $\textrm{gv}(\omega)=[\alpha\wedge d\alpha]\in H^3_{dR}(M)$,
see Godbillon and Vey \cite{godbillon}, Moerdijk and Mr\v{c}un \cite{moerdijk} or Walczak \cite{walczak}.
We are going to prove that the tension $\kappa$ is related to the
Godbillon-Vey class:

\begin{proposition} With the notation above, \label{prop-Gb-V}
\[
 \textrm{gv}(\omega)=[\kappa^\flat\wedge d\kappa^\flat]. 
\]
\end{proposition}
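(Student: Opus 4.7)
The plan is to identify the form $\alpha$ explicitly in terms of $\kappa$. Writing $\nu=\omega^{\sharp}$ for the unit normal to $E$, so that $\omega(V)=g(V,\nu)$ for all $V\in TM$, I would first get a geometric expression for $\kappa$. Complete an orthonormal frame of $E$ to an orthonormal frame of $TM$ by adjoining $\nu$. Then for $X\in E$,
\[
 \textrm{div}(X)=\sum_{i}g(\nabla_{u_{i}}X,u_{i})+g(\nabla_{\nu}X,\nu),\qquad \textrm{div}_{E}(X)=\sum_{i}g(\nabla_{u_{i}}X,u_{i}),
\]
since $\nabla^{E}=\pi\circ\nabla$ and $u_{i}\in E$. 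Together with $0=\nu\,g(X,\nu)=g(\nabla_{\nu}X,\nu)+g(X,\nabla_{\nu}\nu)$ this yields
\[
 g(\kappa,X)=-g(\nabla_{\nu}X,\nu)=g(X,\nabla_{\nu}\nu).
\]
Since $g(\nabla_{\nu}\nu,\nu)=\tfrac{1}{2}\nu\, g(\nu,\nu)=0$, the vector $\nabla_{\nu}\nu$ lies in $E$, hence $\kappa=\nabla_{\nu}\nu$.

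Next I would compute $\alpha$ on $E$ from the defining relation $d\omega=\alpha\wedge\omega$. Evaluating on $(X,\nu)$ with $X\in E$ gives $d\omega(X,\nu)=\alpha(X)\omega(\nu)-\alpha(\nu)\omega(X)=\alpha(X)$. On the other hand, by Cartan's formula and $\omega(X)=0$, $\omega(\nu)=1$,
\[
 d\omega(X,\nu)=X\omega(\nu)-\nu\omega(X)-\omega([X,\nu])=-g([X,\nu],\nu)=-g(\nabla_{\nu}X,\nu)=g(X,\nabla_{\nu}\nu)\cdot(-1)\cdot(-1)
\]
wait — being careful with signs, $g([X,\nu],\nu)=g(\nabla_{X}\nu,\nu)-g(\nabla_{\nu}X,\nu)=0-g(\nabla_{\nu}X,\nu)=g(X,\nabla_{\nu}\nu)=\kappa^{\flat}(X)$, so $\alpha(X)=-\kappa^{\flat}(X)$ for every $X\in E$.

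The 1-form $\alpha$ is defined by $d\omega=\alpha\wedge\omega$ only modulo multiples of $\omega$, so I would fix the canonical representative satisfying $\alpha(\nu)=0$. Since $\kappa\in E$, the form $-\kappa^{\flat}$ also vanishes on $\nu$; combined with the agreement on $E$ above, this forces $\alpha=-\kappa^{\flat}$ globally. Then
\[
 \alpha\wedge d\alpha=(-\kappa^{\flat})\wedge d(-\kappa^{\flat})=\kappa^{\flat}\wedge d\kappa^{\flat},
\]
so $\textrm{gv}(\omega)=[\alpha\wedge d\alpha]=[\kappa^{\flat}\wedge d\kappa^{\flat}]$.

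The only delicate point is step three: one must be aware that $\alpha$ is not unique, and either (as above) pick the distinguished representative compatible with the metric, or else check by direct computation that changing $\alpha$ to $\alpha+c\omega$ alters $\alpha\wedge d\alpha$ by an exact form (using $d\alpha\wedge\omega=0$, which follows from $d(d\omega)=0$). Everything else is a book-keeping computation in a frame adapted to the splitting $TM=E\oplus\mathbb{R}\nu$.
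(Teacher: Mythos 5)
Your proof is correct and takes essentially the same route as the paper's: identify $\kappa=\pm\nabla_\nu\nu$ in an adapted frame, compute $d\omega(X,\nu)$ via the bracket to get $\alpha=\mp\kappa^\flat$ on $E$, and conclude since $\kappa^\flat\wedge d\kappa^\flat$ is unchanged under $\kappa\mapsto-\kappa$; the only cosmetic difference is that the paper runs the frame computation through the embedded gradient fields $\tilde X_i=X_i+\eta_i$ rather than an intrinsic orthonormal frame of $TM=E\oplus\mathbb{R}\nu$. Two minor points in your favour: your signs ($\kappa=\nabla_\nu\nu$, $\alpha=-\kappa^\flat$) are the ones consistent with the paper's stated definition $g(\kappa,X)=\mathrm{div}_E(X)-\mathrm{div}(X)$ (the paper's proof has a harmless sign slip, asserting $\kappa=-\nabla_NN$ and $\alpha=\kappa^\flat$, which cancels in the quadratic expression), and you explicitly handle the non-uniqueness of $\alpha$ modulo multiples of $\omega$ (via the representative with $\alpha(\nu)=0$, or the exactness argument using $d\alpha\wedge\omega=0$), a point the paper leaves implicit.
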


\begin{proof}
Denote by $\eta=\omega^\sharp$ the nowhere
vanishing vector field associated to $\omega$. The following traces vanish
\[ 
 \sum_{i=1}^N g(\tilde X_i,\eta )g(\pi(\nabla_{\eta} X),\tilde X_i)=0\\
\]
and 
\[
\sum_{i=1}^N g(\tilde X_i, \eta)g(\nabla_{\pi\tilde X_i}X,\eta)=0.
\]
Writing each gradient vector field $\tilde X_i$ as
\[
 \tilde X_i=X_i+\eta_i,
\]
where $\eta_i=g(\tilde X_i,\eta) \eta$, then, for any $X\in \Gamma(E) $
\begin{eqnarray*}
 \textrm{div}(X)&=&\sum_{i=1}^N\left(g(\nabla_{X_i}X,X_i)+g(\nabla_{X_i}X,
N_i)+g(\nabla_{N_i}X,X_i)+g(\nabla_{N_i}X,N_i)\right)\\
&=&\textrm{div}_E(X)+||N||^2g(\nabla_NX,N).
\end{eqnarray*}

Thus
\[
 \textrm{div}(X)-\textrm{div}_E(X)=g(X,-\nabla_NN)
\]
so, $\kappa=-\nabla_NN$.
On the other hand, since $d\omega(X,Y)=0$ for all $X,Y\in E$ and
$d\omega(N,N)=0$ then
\begin{eqnarray*}
  d\omega(X,N)&=&-\omega([X,N])\\
&=&g(N,\nabla_NX)\\
&=&g(\kappa,X).
\end{eqnarray*}
Since $d\omega(X,N)=\alpha(X)$ we get that $\alpha=\kappa^\flat$.
\end{proof}


We denote by $\nu(E)$ the normal bundle of $E$ with respect to $\mr^N$, that
is
\[
 \nu(E)=\{(x,v),~x\in M, ~v\in\mr^N, ~\textrm{such that}~ v\perp E_x\}.
\] 
\begin{definition}
The second fundamental form $\alpha\in \Gamma(E^*\otimes E^*\otimes \nu(E))$ is
the unique $\nu(E)$-valued bilinear form satisfying
\[
 <\alpha(X,Y),N>=g(\tilde PdN(X),Y)
\]
for all $X,Y\in\Gamma( E)$ and $N\in \Gamma(\nu(E))$.

The mean curvature is defined by $H=Tr_E(\alpha)$.
\end{definition}


\begin{lemma}\label{suma de divergencias nula}
For all $v\in\mr^N$ we have that
\begin{equation}\label{formula 1}
<H,v>=-\dive(\tilde P(v)).
\end{equation}
\end{lemma}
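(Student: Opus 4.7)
The plan is to decompose the constant $\mathbb{R}^N$-valued section $v$ on $M$ into its tangential-to-$E$ and normal-to-$E$ parts, $v = \tilde P(v) + \tilde Q(v)$, and then recognize the divergence of the tangential part as (minus) the trace of the second fundamental form evaluated on the normal part.

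More concretely, first I would express the foliated divergence using Lemma \ref{ab}(b):
\[
\textrm{div}_E(\tilde P(v)) = \sum_{i=1}^N g(\nabla^E_{X_i}\tilde P(v), X_i).
\]
Since $\tilde P(v)$ is an $E$-valued map obtained by composing the constant $\mathbb{R}^N$-valued section $v$ with $\tilde P$, the formula $\nabla_V Y = P\, dY(V)$ for the ambient Riemannian connection gives $\nabla^E_{X_i}\tilde P(v) = \pi P\, d(\tilde P(v))(X_i) = \tilde P\, d\tilde P(X_i)\, v$. Differentiating $v \equiv \tilde P(v)+\tilde Q(v)$ yields $d\tilde P(X_i)\,v = -d\tilde Q(X_i)\,v$, so
\[
g(\nabla^E_{X_i}\tilde P(v),X_i) = -g(\tilde P\, d\tilde Q(X_i)\,v, X_i).
\]

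Next I would view $\tilde Q(v)$ as a section $N$ of $\nu(E)$ (given by $N(x)=\tilde Q(x)v$). The definition of the second fundamental form then reads $\langle \alpha(X_i,X_i), N\rangle = g(\tilde P\, dN(X_i), X_i) = g(\tilde P\, d\tilde Q(X_i)\,v, X_i)$. Summing and using identity (\ref{ap1}) from Lemma \ref{ab} to pass between the frame $\{X_i\}$ and an orthonormal frame of $E$, one gets
\[
\sum_{i=1}^N \langle \alpha(X_i,X_i), N\rangle = \langle H, N\rangle.
\]
Combining with the previous display produces $\textrm{div}_E(\tilde P(v)) = -\langle H, N\rangle$.

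Finally, since $H\in \Gamma(\nu(E))$ is orthogonal to the $E$-component of $v$, we have $\langle H, v\rangle = \langle H, \tilde Q(v)\rangle = \langle H, N\rangle$, which gives the claimed equality.

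The only delicate point is the extension of the constant vector $v\in\mathbb{R}^N$ to the normal section $N=\tilde Q(v)$ and the careful use of the algebraic identities $\tilde P\circ\tilde Q=0$ and $d\tilde P = -d\tilde Q$ already exploited in Lemma \ref{ab}; once this is set up, the computation is short.
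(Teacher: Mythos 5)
Your proposal is correct and follows essentially the same route as the paper: differentiate the decomposition into $\tilde P$- and $\tilde Q$-parts, identify the resulting term with the second fundamental form via its defining identity, and use $\langle H,\tilde P(v)\rangle=0$; the only cosmetic difference is that you work directly with a general $v$ while the paper computes $\dive(X_i)$ for the basis vectors $e_i$ and concludes by linearity.
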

\begin{proof} We observe that
\[
 \nabla^E_vX_i=-\tilde P d(\tilde Qe_i)(v).
\]
In fact, consider the decomposition
\[
 e_i=X_i+\tilde Q e_i.
\]
 Taking the directional derivative with respect to $v$ and the projection
$\tilde
P$ to $E$ we have
\[
\tilde P d(\tilde Pe_i)(v)+\tilde Pd(\tilde Qe_i)(v)=0.
\]
Using this formula, we find that
\begin{eqnarray*}
\dive (X_i) &=&\sum_{j=1}^N g(\nabla_{X_j}^E X_i,X_j)\\
&=&-\sum_{j=1}^N g(\tilde  P d(\tilde Qe_i)(X_j),X_j)\\
&=&-\sum_{j=1}^N <\tilde Qe_i, \alpha(X_j,X_j)>\\
&=&-<H,e_i>+<H,\tilde Pe_i>.
\end{eqnarray*}
Using that $<H,\tilde P(e_i)>=0$, the result follows by linearity.

\end{proof}

\begin{corollary} \label{formulas_H_X_i} The following formulae hold:
 \begin{equation}\label{formula 2}
  \sum_{i=1}^N\dive(X_i)X_i=0
\end{equation}
and
\begin{equation}\label{formula 3}
 ||H||^2=-\sum_{i=1}^NX_i\dive(X_i).
\end{equation}
\end{corollary}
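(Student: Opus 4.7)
The plan is to treat both formulas as direct consequences of Lemma \ref{suma de divergencias nula}, which, applied to $v=e_i$, yields the pointwise identity $\dive(X_i)=-\langle H,e_i\rangle$. I will denote $H_i:=\langle H,e_i\rangle$, so that $\dive(X_i)=-H_i$ and $H=\sum_i H_i e_i$ as a vector-valued map $M\to\mr^N$. The single geometric input I will use beyond this is that $H\in\Gamma(\nu(E))$, i.e.\ $H\perp E$ pointwise, so $\tilde P(H)=0$.

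For formula (\ref{formula 2}), I would substitute $\dive(X_i)=-H_i$ to get
\[
\sum_{i=1}^N \dive(X_i)\,X_i = -\sum_{i=1}^N H_i\,\tilde P(e_i) = -\tilde P\!\left(\sum_{i=1}^N H_i\,e_i\right) = -\tilde P(H),
\]
which vanishes pointwise because $H\perp E$. This handles (\ref{formula 2}) with essentially no computation beyond the linearity of $\tilde P$.

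For formula (\ref{formula 3}), I would leverage the identity $\sum_i H_i X_i=\tilde P(H)=0$ just obtained, but now view it as a vector field on $M$ whose divergence is zero. Applying the Leibniz rule $\dive(fY)=f\dive(Y)+Y[f]$ termwise,
\[
0=\dive\!\left(\sum_{i=1}^N H_i X_i\right)=\sum_{i=1}^N\bigl(H_i\dive(X_i)+X_i[H_i]\bigr)=-\sum_{i=1}^N H_i^2+\sum_{i=1}^N X_i[H_i].
\]
Since $\sum_i H_i^2=\|H\|^2$, this rearranges to $\sum_i X_i[H_i]=\|H\|^2$, and because $\dive(X_i)=-H_i$ we conclude $\sum_i X_i\dive(X_i)=-\sum_i X_i[H_i]=-\|H\|^2$.

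I do not anticipate a genuine obstacle: the only conceptual point is realizing that the vanishing of $\tilde P(H)$ is the same identity that drives both parts, and then that formula (\ref{formula 3}) is the Leibniz expansion of $\dive\tilde P(H)=0$ combined with Lemma \ref{suma de divergencias nula}. If there is a subtle point, it is the interchange $\sum_i \tilde P(e_i)\langle H,e_i\rangle=\tilde P(H)$, which holds pointwise because $\{e_i\}$ is an orthonormal basis of the ambient $\mr^N$ and $\tilde P(m)$ is linear; this makes the argument basis-independent despite the presence of the fixed frame $\{e_i\}$.
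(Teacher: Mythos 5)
Your proposal is correct and follows essentially the same route as the paper: formula (\ref{formula 2}) is obtained by substituting Lemma \ref{suma de divergencias nula} and using $H\perp E$ (i.e.\ $\tilde P(H)=0$), and formula (\ref{formula 3}) comes from expanding the foliated divergence of the vanishing field $\sum_i\dive(X_i)X_i$ by the Leibniz rule, which is exactly the recombination the paper performs in its triple-sum computation. Your phrasing via $\dive_E(\tilde P(H))=0$ is just a slightly cleaner packaging of the same argument.
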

\begin{proof}

Formula (\ref{formula 2}) follows by substituting Equation (\ref{formula
1}) in 
\[
 \sum_{i=1}^N\dive(X_i)X_i=-\sum_{i=1}^N<H,\tilde Qe_i>\tilde Pe_i=0.
\]

For the proof of formula (\ref{formula 3}) we calculate
$||H||^2$ and use Lemma \ref{suma de divergencias nula}:

\begin{eqnarray*}
 ||H||^2&=&\sum_{i=1}^N<H,e_i>^2\\
&=&\sum_{i=1}^N\dive(X_i)^2\\
&=&\sum_{i,j,k=1}^Ng(\nabla_{X_j}^EX_i,X_j)g(\nabla_{X_k}^EX_i,X_k)\\
&=& \sum_{k=1}^Ng\left(\nabla_{X_k}\left(\sum_{i=1}^N\dive(X_i)
X_i\right),X_k\right)-\sum_{i=1}^NX_i(\dive(X_i))\\
&=& -\sum_{i=1}^NX_i\dive(X_i).
\end{eqnarray*}
%

\end{proof}


\section{Invariant and totally invariant measures}

A construction of a foliated Brownian motion (FoBM) with drift can be obtained
via a Stratonovich
SDE using the gradient vector fields $X_1, \ldots  X_N$ defined before:
\begin{equation} \label{grad-FoBm}
\left\{ \begin{array}{rcl}
dX&=&V(X)~ dt+ \displaystyle \sum_{i=1}^NX_i(X)~\delta B^i\\
X_0&=&x_0 \in M,
\end{array} \right. 
\end{equation}
where  $V$ is a section of $E$ and $(B^1,\ldots,B^N)$ is the standard Brownian
motion on $\mr^N$ based on a filtered probability space $(\Omega, \mathcal{F}_t,
\mathcal{F}, \mathbf{P})$. Lemma \ref{ab} guarantees that the infinitesimal
generator
associated to the process $X_t$ is given by
$
\mathcal{L}=V+\frac{1}{2}\Delta_E.$ 

Alternative construction of a foliated Brownian motion is given via projection
on $M$ of a diffusion generated by standard vector fields in the orthonormal
frame bundle, see \cite{NA}.

Let $T_t$, with $t\geq 0$, be the Markov semigroup of operators associated to
FoBM with drift acting on $\bbl$, the space of bounded measurable
functions which are leafwise smooth.
A measure $\mu$ is invariant if
$\int_MT_tf~d\mu=\int_M f\mu$ for all $f \in \bbl$. It is equivalent to $\int
(\mathcal{L}f)~d\mu=0$.
The assumption of compactness of $M$ guarantees the existence of invariant
measures for foliated diffusions. 






A point $x$ in $ M$ is called \textit{recurrent} for the process $X$ if for all
open
neighborhoods $U$ of $x$ we have $\mP  \{\omega\in \Omega, X_{t_k}(\omega)\in
U~\textrm{ for a
sequence}~
t_k\rightarrow\infty\}=1$. A subset $U$ of $M$ is said to be \textit{saturated}
if it is the union of all the leaves
passing through points of $U$, i.e.
\[
\bigcup_{x\in U}L_x \subseteq U.
\]

 Next proposition says that the support of any
invariant measure is a saturated set.

\begin{proposition}
Let $X$ be a foliated Brownian motion with drift. The support of an invariant
measure $\mu$ is a
saturated Borel set contained in the set of recurrent points.
\end{proposition}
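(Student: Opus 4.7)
The proposition has two components: (i) $\supp\mu$ is saturated, i.e., $x \in \supp\mu$ and $y \in L_x$ imply $y \in \supp\mu$; and (ii) every point of $\supp\mu$ is recurrent. The geometric ingredients I would use in both parts are that the SDE~(\ref{grad-FoBm}) involves only vector fields in $E$, so the trajectories of the FoBM stay inside their initial leaf, and that Brownian motion with smooth drift on the connected Riemannian leaf $L_x$ has strictly positive transition density: $p_t(x,V) > 0$ for every $t > 0$ and every open $V \subset M$ with $V \cap L_x \neq \emptyset$.

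\textbf{Saturation.} Invariance of $\mu$ gives $\int p_t(z,\supp\mu)\, d\mu(z) = \mu(\supp\mu) = 1$, hence $p_t(\cdot,\supp\mu) = 1$ for $\mu$-a.e.\ $z$. Approximating $\mathbf{1}_{\supp\mu}$ from above by continuous functions and invoking the Feller property of the FoBM shows that $z \mapsto p_t(z,\supp\mu)$ is upper semicontinuous, so the equality extends to all of $\supp\mu$. By path continuity, the FoBM started at any $x \in \supp\mu$ remains in $\supp\mu$ for all times almost surely. For any $y \in L_x$ and any neighborhood $V$ of $y$, leafwise positivity of the heat kernel gives $p_t(x,V) > 0$, forcing $V \cap \supp\mu \neq \emptyset$; letting $V$ shrink to $\{y\}$ yields $y \in \supp\mu$.

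\textbf{Recurrence.} For each open $U$ set $g_U(z) = \mP_z(X_{t_k} \in U \text{ for some sequence } t_k \to \infty)$. The Poincar\'e recurrence theorem for Markov processes with an invariant measure yields $g_U = 1$ $\mu$-a.e.\ on $U$: this follows from applying the Markov property to the tail event $\{X_t \in U \text{ i.o.}\}$ and from $\me_\mu\bigl[\int_0^T \mathbf{1}_U(X_t)\, dt\bigr] = T\mu(U)$. Writing $g_U$ as a decreasing limit of hitting probability functions $T_T\phi_U$, with $\phi_U(z) = \mP_z(\text{ever hit } U)$, the Feller property gives upper semicontinuity of $g_U$. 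For $x \in \supp\mu \cap U$, approximating $x$ by points $y_n \in \supp\mu \cap U$ at which $g_U(y_n) = 1$ forces $g_U(x) = 1$. Applied to a countable neighborhood basis at $x$, this shows $x$ is recurrent.

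The main technical obstacle in both parts is the upper semicontinuity step: the FoBM semigroup is transversely degenerate, so it does not smooth arbitrary functions, and one cannot rely on heat-kernel regularity. The upper semicontinuity must instead be obtained from monotone approximation by continuous functions combined with the Feller property and dominated/monotone convergence. Once this is in place, positivity of the leafwise density together with the characterization of $\supp\mu$ via invariance yields both claims in parallel.
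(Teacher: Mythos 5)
Your saturation half is correct and is essentially the paper's own argument: invariance forces $\mathrm{supp}(P_t(x,\cdot))\subseteq\mathrm{supp}(\mu)$ for $x\in\mathrm{supp}(\mu)$, and leafwise nondegeneracy gives $L_x\subseteq\mathrm{supp}(P_t(x,\cdot))$. Your semicontinuity step works there precisely because $\mathrm{supp}(\mu)$ is \emph{closed}: its indicator is a decreasing limit of continuous functions, and the Feller operator $T_t$ at a fixed time preserves that structure, so $z\mapsto P_t(z,\mathrm{supp}(\mu))$ is upper semicontinuous. The Poincar\'e-recurrence fact that $g_U=1$ $\mu$-a.e.\ on $U$ is also fine in substance (via the shift on path space with stationary law), even if your justification is sketchy.

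The gap is in the recurrence half, exactly at the step you flag as the ``main technical obstacle.'' You write $g_U=\lim_{T\to\infty}\downarrow T_T\phi_U$ and claim the Feller property gives upper semicontinuity of $g_U$. But $\phi_U(z)=\mP_z(\text{ever hit }U)$ with $U$ \emph{open} is only lower semicontinuous (path continuity plus openness of $U$), not continuous; since the FoBM is transversally degenerate, $T_T$ does not regularize it, so $T_T\phi_U$ is again merely l.s.c., and a decreasing limit of l.s.c.\ functions need not be u.s.c. The closed-set trick from the saturation step is unavailable here because the relevant set is open and the time horizon is unbounded, and in the foliated setting $\phi_U$ and $g_U$ genuinely jump across leaves (a leaf disjoint from $U$ can be accumulated by leaves that meet $U$), so propagating $g_U=1$ from $\mu$-a.e.\ point of $U$ to every point of $\mathrm{supp}(\mu)\cap U$ cannot follow from the Feller property alone. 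The paper does not attempt a soft argument at all: it quotes Kliemann \cite[Lemma 4.1]{klieman}, whose proof for degenerate diffusions is control-theoretic (support theorem, invariant control sets), and even that citation only yields that $\mathrm{supp}(\mu)$ is contained in the \emph{closure} of the set of recurrent points. To close your argument you would need an input of that type -- e.g.\ a support/control estimate giving a uniform lower bound on returning to a neighborhood of an already-recurrent point, followed by a Markov/Borel--Cantelli renewal argument -- or simply invoke Kliemann's lemma as the paper does.
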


\begin{proof} 

Denote by $P_t(x,dy)$ the family of transition probabilities of the process. For
$x \in  \mathrm{supp} (\mu)$, by the action of $T_t^*$ in $\delta_x$ we have
that 
\[\mathrm{supp}(P_t(x,dy))\subseteq
\mathrm{supp}(T_t^*\mu)=\mathrm{supp}(\mu),
\] 
and  $ L_x \subseteq \textrm{supp}(P_t(x,~))$ for any $t>0$ since the diffusion
is
nondegenerate in the leaves. Hence
\[
 \bigcup_{x\in\textrm{supp}(\mu)}L_x\subseteq\textrm{supp} (\mu).
\]
 The result follows by the fact that supp$(\mu)$ is contained in
the closure of the subset of recurrent points of $M$ (Kliemann \cite[Lemma
4.1]{klieman}).

\end{proof}


The addition of a drift in the foliated Laplacian preserves Liouville type
theorem for harmonic functions in foliated spaces (Garnett \cite{LG}):

\begin{corollary}
 Let $X$ be a FoBM with drift and $\mu$ an invariant measure. Any function
$f\in\bbl$ satisfying $\mathcal{L}f=0$ is constant on 
every leaf $\mu$- a.s..

\end{corollary}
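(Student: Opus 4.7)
The plan is to combine a \emph{carr\'e du champ} identity with the leafwise ellipticity of the foliated Brownian motion. The key identity is
\[
\mathcal{L}(f^2) = 2f\,\mathcal{L}f + \|\grade f\|^2,
\]
which is immediate from Lemma \ref{ab}(c): writing $\Delta_E=\sum_{i=1}^N X_i^2$ and applying the Leibniz rule gives $\Delta_E(f^2)=2f\,\Delta_E f+2\sum_i(X_i f)^2=2f\,\Delta_E f+2\|\grade f\|^2$, while the drift $V\in\Gamma(E)$, being a first-order derivation, satisfies $V(f^2)=2fV(f)$. Since $f\in\bbl$ implies $f^2\in\bbl$, the invariance of $\mu$ applied to $f^2$, combined with $\mathcal{L}f=0$, yields
\[
0 = \int_M\mathcal{L}(f^2)\,d\mu = \int_M\|\grade f\|^2\,d\mu,
\]
so $\grade f=0$ holds $\mu$-almost everywhere.

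To promote this pointwise statement to leafwise constancy I would use the SDE (\ref{grad-FoBm}). By It\^o's formula and $\mathcal{L}f=0$, the process $M_t:=f(X_t)-f(X_0)$ is a bounded local martingale, hence a true martingale, with quadratic variation $\langle M\rangle_t=\int_0^t\|\grade f(X_s)\|^2\,ds$. Launching the process from the invariant distribution $\mu$ and using stationarity,
\[
\me_\mu\bigl[\langle M\rangle_t\bigr] = t\int_M\|\grade f\|^2\,d\mu = 0,
\]
so $M\equiv 0$; equivalently, $f(X_t)=f(x_0)$ almost surely for $\mu$-a.e.\ starting point $x_0$.

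By equation (\ref{ap1}) the vector fields $X_1,\dots,X_N$ span $E$ everywhere, so the diffusion is leafwise elliptic and its transition kernel $P_t(x_0,\cdot)$ admits a strictly positive smooth density on $L_{x_0}$ for every $t>0$. Combined with the leafwise continuity of $f$ and the identity $f(X_t)=f(x_0)$ a.s., this forces $f\equiv f(x_0)$ on the whole leaf $L_{x_0}$ for $\mu$-a.e.\ $x_0$, which is the desired conclusion. The main obstacle is precisely this propagation step: the $\mu$-a.e.\ vanishing of $\grade f$ does not by itself imply constancy on an entire leaf, since $\mu$ need not charge every point of a leaf in its support; the diffusion is what carries the information across each leaf.
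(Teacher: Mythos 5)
Your proposal is correct, and its first half is exactly the paper's argument: the carr\'e du champ identity $\mathcal{L}(f^2)-2f\mathcal{L}f=\|\grade f\|^2$ together with invariance of $\mu$ applied to $f^2\in\bbl$ gives $\int_M\|\grade f\|^2\,d\mu=0$. Where you genuinely diverge is the propagation step. The paper simply asserts that $\|\grade f\|=0$ ``on a saturated set'' and concludes leafwise constancy on $\mathrm{supp}(\mu)$; this is terse, since $\mu$-a.e.\ vanishing of $\grade f$ does not by itself yield vanishing along entire leaves --- one implicitly needs the local structure of invariant (harmonic-type) measures, namely that they disintegrate along plaques with leafwise positive densities, so that a $\mu$-null set meets $\mu$-a.e.\ leaf in a leaf-volume null set, and then leafwise smoothness finishes the job. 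You instead run the diffusion itself: with $\mathcal{L}f=0$, It\^o's formula (applied leafwise, which is legitimate since the solution of (\ref{grad-FoBm}) stays in the leaf and $f$ restricted to the leaf is smooth) makes $f(X_t)-f(x_0)$ a bounded martingale whose quadratic variation has zero $\mathbf{P}\times\mu$-expectation by stationarity, and then leafwise ellipticity of $\frac12\Delta_L+V$ (positive transition densities on each connected leaf) plus leafwise continuity forces $f\equiv f(x_0)$ on $L_{x_0}$ for $\mu$-a.e.\ $x_0$. Your route is longer but self-contained and makes explicit precisely the step the paper glosses over; the paper's route is shorter but leans on the unstated disintegration/saturation property of invariant measures. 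Both reach the same conclusion, and your identification of the propagation step as ``the main obstacle'' is accurate.
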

\begin{proof} Given such a function $f$,

\[
 \int_M||\grade f||^2 \ d\mu
= \int_M (\mathcal{L}(f^2)-2f\mathcal{L}f)~d \mu  =  0.
\]
Thus $||\grade f||=0$ on a saturated set and therefore $f$ is leafwise constant
on the $\mathrm{supp} (\mu)$.
\end{proof}

Assume that the bundle $E$ defining the foliation is oriented such that there
exists a volume form on the leaves $\chi_E\in \Omega^p(M)$  and $\upsilon \in
\Gamma(\Lambda^pE)$ with $\chi_E(\upsilon)=1$. A probability measure $\mu$ on
$M$ defines a $p$-current $\psi_\mu: \Gamma(\Lambda^pE^*) \rightarrow \mr$
which for $\alpha \in \Gamma(\Lambda^pE^*)$ is given by:
\[
 \psi_\mu(\alpha):=\int_M\alpha(\upsilon)~d\mu.
\]
A measure $\mu$ is called\textit{ totally invariant} if the associated
$p$-current
$\psi_{\mu}$ is a foliated cycle, that is   $L_X\psi_\mu=0$ for any $X\in
\Gamma(E)$ (Candel \cite{candel1}, Garnett
\cite{LG}, Sullivan
\cite{Sullivan}).
In terms of a foliated atlas, an alternatively description of a totally
invariant measure $\mu$ is via the product of the volume measure on the leaves
$\chi_E$ and a holonomy invariant measure $\nu$ in the following sense:
\[
\int f~d\mu = \sum_{\alpha \in \mathcal{U}}~\int_{S_\alpha} \left(
\int_P \lambda_\alpha f\chi_E \right) ~d\nu (P)
\]
where $\lambda_\alpha$ is a partition of unity subordinated to a foliated
atlas $\mathcal{U}= \{(U_{\alpha}, \varphi_{\alpha}): \alpha \in A \}$, $P$ are
plaques in $U_\alpha$ and $S_\alpha$
are transversal in $U_\alpha$ (see Plante \cite[p.330]{plante},  Candel
\cite[p.235]{candel}). 

The following theorem characterizes totally invariant measures in terms of
stochastic flows. 

\begin{theorem} \label{thm_carac_mti}
 A measure $\mu$ is totally invariant if and only if its associated $p$-current
$\psi_{\mu}$ is invariant by the flow of the gradient foliated Brownian motion
for each  $\omega$ a.s..
\end{theorem}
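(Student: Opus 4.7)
My plan is to establish both implications through a single SDE for the real-valued semimartingale $t \mapsto \psi_\mu(\varphi_t^* \alpha)$, where $\varphi_t$ is the flow of (\ref{grad-FoBm}). Since $V$ and the $X_i$ are all sections of $E$, the flow preserves the foliation pathwise and $\varphi_t^*$ carries $\Gamma(\Lambda^p E^*)$ into itself. By Kunita's formula for the stochastic pullback of a differential form,
\[
d(\varphi_t^* \alpha) = \varphi_t^*(L_V \alpha)\,dt + \sum_{i=1}^N \varphi_t^*(L_{X_i} \alpha)\, \delta B^i,
\]
where the Lie derivatives are computed leafwise. Integrating this identity against $\mu$ and commuting the Stratonovich integral with the spatial integration yields
\[
d \psi_\mu(\varphi_t^* \alpha) = \psi_\mu(\varphi_t^* L_V \alpha)\,dt + \sum_{i=1}^N \psi_\mu(\varphi_t^* L_{X_i} \alpha)\,\delta B^i.
\]

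For the direct implication, assume $\mu$ is totally invariant. I would use the naturality identity $\varphi_t^*(L_X \alpha) = L_{\varphi_t^{*} X}(\varphi_t^* \alpha)$. Because $\varphi_t^{*} V$ and $\varphi_t^{*} X_i$ remain sections of $E$ pathwise and $\varphi_t^*\alpha \in \Gamma(\Lambda^p E^*)$, the geometric hypothesis $L_X \psi_\mu = 0$ for every $X \in \Gamma(E)$ (which is $\omega$-independent) forces $\psi_\mu(\varphi_t^* L_V \alpha) = \psi_\mu(\varphi_t^* L_{X_i} \alpha) = 0$ for every $\omega$ and $t$. Hence both drift and diffusion in the SDE above vanish identically and $\psi_\mu(\varphi_t^* \alpha) = \psi_\mu(\alpha)$ for all $t$, a.s.

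For the converse, if $t \mapsto \psi_\mu(\varphi_t^* \alpha)$ is a.s. constant, I would convert the Stratonovich SDE to Itô form; vanishing of the quadratic variation of the resulting martingale part forces $\psi_\mu(\varphi_t^* L_{X_i} \alpha) = 0$ for every $i$, every $t \geq 0$, and every $\alpha$, a.s. Evaluating at $t=0$ gives $\psi_\mu(L_{X_i} \alpha) = 0$. Since $\alpha$ is of top leafwise degree, I can write $\alpha = f \chi_E$, so this identity reads $\int_M \dive(f X_i)\,d\mu = 0$ for all $f \in C^\infty(M)$ and each $i$. For an arbitrary $Y \in \Gamma(E)$, the completeness relation $\sum_i X_i \otimes X_i = \mathrm{Id}_E$ coming from (\ref{ap1}) in Lemma \ref{ab} gives $Y = \sum_i g(Y, X_i)\, X_i$, and hence $\int_M \dive(Y)\,d\mu = \sum_i \int_M \dive(g(Y, X_i) X_i)\,d\mu = 0$, which is exactly $L_Y \psi_\mu = 0$ for every $Y \in \Gamma(E)$.

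The chief technical obstacle is to make Kunita's pullback formula rigorous in this stochastic-geometric setting and to justify the interchange of the Stratonovich integral with $\int_M \cdot\,d\mu$; both rely on the compactness of $M$ and smoothness of $\alpha$, $V$, and the $X_i$ to control integrability of the coefficients. A secondary but conceptually important point is that the converse cannot yield $L_X \psi_\mu = 0$ directly for a generic $X \in \Gamma(E)$: one must first extract the identity for the distinguished fields $X_1, \dots, X_N$ furnished by the isometric embedding and then invoke the completeness relation (\ref{ap1}) to recover all of $\Gamma(E)$.
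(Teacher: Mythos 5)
Your proof is correct and follows essentially the same route as the paper: Kunita's pullback formula for $\phi_t^*\alpha$, a Doob--Meyer/quadratic-variation argument to extract $\psi_\mu(L_{X_i}\alpha)=0$ for the converse, and the expansion $Y=\sum_i g(Y,X_i)X_i$ coming from (\ref{ap1}) (the paper's $X=\sum_i a_iX_i$) to pass from the $X_i$ to arbitrary sections of $E$. The only cosmetic differences are that for the direct implication the paper invokes the backward-Stratonovich form with integrands $L_{X_i}\phi_s^*\alpha$ where you use the naturality identity $\phi_s^*(L_{X_i}\alpha)=L_{\phi_s^*X_i}(\phi_s^*\alpha)$ together with leaf preservation, and that the paper treats a general $p$-form on $M$ via the decomposition $\alpha=f\chi_E+\beta$ with $\beta(\upsilon)=0$ where you work directly with leafwise top-degree forms $f\chi_E$.
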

\begin{proof} Let $\phi_t$ denote the stochastic flow of the gradient FoBM with
drift of Equation (\ref{grad-FoBm}). For  any $p$-form $\alpha$, following
Kunita \cite{kunita} Theorem 4.2, we have that
\begin{eqnarray}
 \phi_t^*\alpha&=&\alpha+\int_0^t\phi_s^*L_V\alpha~ds+\sum_{i=1}
^N\int_0^t\phi_s^*L_{X_i}\alpha~ \delta B^i_s\label{formula kunita}\\
&=&\alpha+\int_0^tL_V\phi_s^*\alpha~ds+\sum_{i=1}^N\int_0^tL_{X_i}
\phi_s^*\alpha~ \hat{\delta}B^i_s\nonumber
\end{eqnarray}
where the $\hat{\delta}$ denotes the backward Stratonovich integral.

Let $\mu$ be a totally invariant measure. By definition, the integrands of
the last part of Equation ($\ref{formula kunita}$) vanishes for any $p$-form
$\alpha$. Hence
$\psi_\mu(\phi_t^*\alpha)=\psi_\mu(\alpha)$ a.s..

On the other hand, assume that for any
$p$-form $\alpha$ in $M$ we have that
$\psi_\mu(\phi_{t}^*\alpha)=\psi_\mu(\alpha)$ a.s.. Equation $(\ref{formula
kunita})$ and Doob-Meyer decomposition  implies that 
$
\psi_\mu(L_{V}\alpha)=0~\textrm{and}~
\psi_\mu(L_{X_i}\alpha)=0~\textrm{for}~ i=1,\ldots, N.$ We have to prove that
$\psi_\mu(L_X\alpha)=0$ for all $X\in \Gamma(E)$ and all $p-$form $\alpha$ in
$M$. 
Any $p$-form $\alpha$ can be written as $\alpha=f\chi_E+\beta$
with $\beta$ a $p$-form such that $\beta(\upsilon)=0$. 

We have that $\psi_\mu(L_X\beta)=0$ since
\[
 L_X\beta(\upsilon)=X(\beta(\upsilon))-\sum_{j=1}^p
g([X,v_j],v_j)~\beta(\upsilon)=0
\]
for a local expression of $\upsilon=v_1\wedge\cdots\wedge v_p$ in terms of
orthonormal sections in $\Gamma(E)$.

Let $X=\sum_{i=1}^Na_iX_i$ for some smooth functions $a_i$. We have that
\begin{eqnarray*}
\psi_\mu(L_X(f\chi_E))&=& \psi_\mu((X(f)+f\dive(X))~\chi_E)\\
&=&\sum_{i=1}^N\psi_\mu\left(a_iX_i(f)\chi_E+X_i(a_i)f\chi_E+a_ifL_{X_i}
\chi_E\right)\\
&=&\sum_{i=1}^N\psi_\mu\left(L_{X_i}(fa_i\chi_E)\right)\\
&=&0.
\end{eqnarray*}
\end{proof}

The group action of the flow $\phi_t$ in the $p$-current $\phi_{\mu}$ associated
to a measure $\mu$ is a $p$-current $\phi_{\mu_t}$ associated to a measure
$\mu_t$. In fact, direct calculation shows that $ \mu_t = \phi_{t*} \left(
\mathrm{det}_E (\phi_{t*}) \mu  \right)$,
where $\mathrm{det}_E (\phi_{t*})= \chi_E(\phi_{t*} (\upsilon))$ is the
determinant in the leaf. Hence, 
the action of the flow $\phi_t$ in the $p$-currents induces an action of the
flow in the space of measures given by $\phi_t \star \mu := \mu_t$. Denoting by
$\theta_t$ the canonical shift in the probability space $\Omega$, the cocycle
property of the flow implies that

\begin{corollary} The group action of the flow $\phi_t$ on the measures
\[
\mu_t=\phi_t \star \mu=  \phi_{t*} \left( \mathrm{det}_E (\phi_{t*}) \mu 
\right)
\]
   satisfies the cocycle property 
\[ 
\phi_s (\theta_t \omega) \star \mu_t (\omega) = \mu_{t+s}(\omega).
\]
 A deterministic measure $\mu$ is totally invariant if and only if it is a fixed
point of the action of $\phi_t$ a.s..
 
\end{corollary}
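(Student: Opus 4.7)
The plan is to deduce both statements from the preceding machinery: the cocycle property is a transfer of the flow cocycle $\phi_{t+s}(\omega)=\phi_s(\theta_t\omega)\circ\phi_t(\omega)$ through the current identification that defines $\star$, and the characterization of totally invariant measures as fixed points is an immediate reformulation of Theorem \ref{thm_carac_mti}. Before starting, I would make explicit the bridge between the pushforward description $\mu_t=\phi_{t*}(\mathrm{det}_E(\phi_{t*})\mu)$ and the current-level action, namely that
\[
\psi_{\phi_t\star\mu}(\alpha)=\int_M\alpha(\upsilon)\,d\mu_t=\int_M\phi_t^*\alpha(\upsilon)\,d\mu=\psi_\mu(\phi_t^*\alpha)
\]
for every $\alpha\in\Gamma(\Lambda^pE^*)$. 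This identity is the workhorse of the whole argument and follows from the change of variables formula together with the very definition of $\mathrm{det}_E(\phi_{t*})$.

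For the cocycle property, I would fix $\omega$ outside the usual null set and apply the flow cocycle pointwise. Using the bridge above twice, once with the measure $\mu_t(\omega)$ and the flow $\phi_s(\theta_t\omega)$, and once with $\mu$ and $\phi_{t+s}(\omega)$, I would compute for any $p$-form $\alpha$
\begin{align*}
\psi_{\phi_s(\theta_t\omega)\star\mu_t(\omega)}(\alpha)
&=\psi_{\mu_t(\omega)}\bigl(\phi_s(\theta_t\omega)^*\alpha\bigr)
=\psi_\mu\bigl(\phi_t(\omega)^*\phi_s(\theta_t\omega)^*\alpha\bigr)\\
&=\psi_\mu\bigl((\phi_s(\theta_t\omega)\circ\phi_t(\omega))^*\alpha\bigr)
=\psi_\mu\bigl(\phi_{t+s}(\omega)^*\alpha\bigr)
=\psi_{\mu_{t+s}(\omega)}(\alpha).
\end{align*}
Since a measure on $M$ is determined by the associated $p$-current (one tests against $f\chi_E$ for arbitrary smooth $f$), equality of the two currents yields the desired equality of measures.

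For the fixed-point characterization, I would just unwind definitions. For a deterministic $\mu$, the condition $\phi_t\star\mu=\mu$ a.s.\ is, by the bridge identity, equivalent to $\psi_\mu(\phi_t^*\alpha)=\psi_\mu(\alpha)$ a.s.\ for every $p$-form $\alpha$, i.e.\ the $p$-current $\psi_\mu$ is invariant under the flow of the gradient FoBM. Theorem \ref{thm_carac_mti} identifies this last condition with total invariance of $\mu$, which concludes the proof.

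The main obstacle is bookkeeping rather than substance: one has to verify that the $\phi_t\star$ action really corresponds to pullback on currents (handled by the bridge computation), and that the quantifier ``for every $\alpha$'' can be moved outside the a.s.\ statement — which is immediate for deterministic $\mu$ since the exceptional set produced by Theorem \ref{thm_carac_mti} can be taken to work simultaneously for a countable dense family of smooth forms, as in its proof via the Doob--Meyer decomposition of \eqref{formula kunita}.
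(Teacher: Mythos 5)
Your proposal is correct and follows essentially the same route as the paper: the ``bridge identity'' $\psi_{\phi_t\star\mu}(\alpha)=\psi_\mu(\phi_t^*\alpha)$ is exactly the paper's ``direct calculation'' identifying the current action with $\mu_t=\phi_{t*}(\mathrm{det}_E(\phi_{t*})\mu)$, the cocycle property is read off from the flow cocycle through this identification, and the fixed-point characterization is obtained by invoking Theorem \ref{thm_carac_mti}, just as in the paper's (much terser) proof. Your extra care with the quantifier over forms and the null sets is a reasonable filling-in of details the paper leaves implicit, not a different argument.
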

\begin{proof} The formula follows imediately from the group action of $\phi_t$
on $p$-currents. Totally invariance comes from Theorem \ref{thm_carac_mti}.
 
\end{proof}



\section{Ergodic Measures}\label{ergodic measures}

In this section we study the support of ergodic invariant probability measures
in $M$ for the foliated
Brownian motion with drift.
A \textit{minimal set} $K$ is a closed nonempty saturated set with
the property that if $K' \subseteq K$ is again a nonempty closed
saturated set, then $K=K'$.  A
\textit{transitive set} is a minimal set such that there exists at least one
dense leaf, i.e. the transitive sets are closures of the leaves.
Lemma \ref{sop. medida ergodica} below implies that the support of ergodic
measures always contains a minimal set.


%




\begin{lemma} \label{sop. medida ergodica}
 Let $\mu$ be an ergodic invariant measure for the foliated Brownian motion
with drift. The support $\textrm{supp}(\mu)$ is a transitive set. Moreover for
any minimal set $K$ we have that
$\mu (K)=0$ or $\mu(K)=1$.
\end{lemma}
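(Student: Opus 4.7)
My plan is to dispatch the two assertions separately. First I would establish the measure dichotomy on minimal sets, which is a quick consequence of ergodicity together with the fact that the foliated Brownian motion stays inside its leaf; then I would use the pointwise ergodic theorem to pin down the support as the closure of a single leaf.

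For the second assertion, observe that any closed saturated Borel set $K$ is invariant for the Markov process: the solution of (\ref{grad-FoBm}) is leafwise, so starting at $x\in K$ the path remains in $L_x\subseteq K$ with probability one, while for $x\notin K$ saturation gives $L_x\cap K=\emptyset$ and hence $P_t(x,K)=0$. Thus $\mathbf{1}_K(X_t(x))=\mathbf{1}_K(x)$ almost surely, so $K$ belongs to the invariant $\sigma$-algebra of the process. Ergodicity of $\mu$ then forces $\mu(K)\in\{0,1\}$, which applies in particular to every minimal set.

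For the transitivity of $\textrm{supp}(\mu)$, I would invoke the pointwise ergodic theorem for the continuous-time Markov semigroup $T_t$: for every continuous $f$ on $M$ and for every $x$ in a set of full $\mu$-measure,
\[
\frac{1}{T}\int_0^T f(X_s(x))\,ds\;\longrightarrow\;\int_M f\,d\mu \qquad \textrm{as } T\to\infty, \ \mathbf{P}\text{-a.s.}
\]
Separability of $C(M)$ upgrades this to weak convergence of the empirical occupation measures to $\mu$. In particular, with $\mathbf{P}$-probability one the trajectory meets every open set of positive $\mu$-measure and is therefore dense in $\textrm{supp}(\mu)$. Since the trajectory is confined to $L_x$, this gives the inclusion $\textrm{supp}(\mu)\subseteq\overline{L_x}$. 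The reverse inclusion $\overline{L_x}\subseteq\textrm{supp}(\mu)$ is immediate from the preceding proposition, which tells us the support is closed and saturated. Hence $\overline{L_x}=\textrm{supp}(\mu)$ for $\mu$-almost every $x$, and by the paper's convention this is precisely a transitive set.

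The step I expect to require most care is the pointwise ergodic theorem in continuous time: one needs invariance of $\mu$, a measurable choice of the limit, and the argument that the exceptional null set of starting points can be taken uniformly over a countable dense family of test functions, so that weak convergence of occupation measures holds on a single set of full $\mu$-measure. Once this is in place, combining it with the saturation of the support from the previous proposition delivers both conclusions without further work.
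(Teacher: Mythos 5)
Your proof is correct and follows essentially the paper's argument: the paper identifies $\mu$ as the weak limit of the time-averaged transition kernels $\frac{1}{t}\int_0^t P_s(x,\cdot)\,ds$ from a $\mu$-typical starting point $x$ and uses $\mathrm{supp}(P_t(x,\cdot))=\overline{L_x}$, while you use the pathwise (Birkhoff) version of the same ergodic theorem together with the leafwise confinement of the trajectory and the saturation of $\mathrm{supp}(\mu)$ from the preceding proposition, which are the same two ingredients. The dichotomy $\mu(K)\in\{0,1\}$ is handled exactly as the paper indicates, via invariance of the closed saturated set $K$ under the process and ergodicity of $\mu$.
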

\begin{proof} For $\mu$- almost
every point $x \in  \mathrm{supp} (\mu)$ 
we have the weak limit:
\begin{equation}\label{eq. medida} 
 \mu(dy)=\lim_{t\rightarrow\infty}\frac{1}{t}\int_0^tP_{s}(x,
dy)~ds,
\end{equation}
with
\begin{equation}\label{eq. soporte}
 \textrm{supp}(\mu)=\overline{ \bigcup_{t> 0}
\textrm{supp}(P_{t}(x,dy))}.
\end{equation}
 But for all $t>0 $, $\mathrm{supp}(P_t(x,dy))=\overline{L_x}$, the
leaf through $x$. 

The second statement is straighforward by invariance of $K$ and ergodicity of
$\mu$.

\end{proof}


\begin{corollary} 
 Let $K\subset M$ be a minimal set. There always exists an ergodic measure
suported on $K$.
\end{corollary}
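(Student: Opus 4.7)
The plan is to obtain an ergodic measure supported on $K$ by restricting the diffusion to $K$, applying Krylov--Bogolyubov and then Krein--Milman, and finally using the earlier proposition that the support of an invariant measure is saturated together with minimality of $K$.

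First I would observe that $K$ is a compact metric subspace of $M$ (closed in compact $M$), and that $K$ is invariant for the foliated Brownian motion with drift: since the driving vector fields $V$ and $X_i$ take values in $E$, sample paths stay in the leaf through the initial point, and by saturation of $K$ every such leaf is contained in $K$. Consequently the Markov semigroup $T_t$ acts on $C(K)$, and its dual acts on the space $\mathcal{P}(K)$ of Borel probability measures on $K$.

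Next I would apply the standard Krylov--Bogolyubov procedure: pick any $x_0\in K$ and any $\nu_0\in \mathcal{P}(K)$, and form the Cesaro averages
\[
\nu_t := \frac{1}{t}\int_0^t T_s^* \nu_0 \, ds.
\]
Since $K$ is compact, $\mathcal{P}(K)$ is weakly compact, so $(\nu_t)$ admits a weak accumulation point $\mu\in \mathcal{P}(K)$, and standard manipulation shows $T_s^*\mu=\mu$ for every $s\ge 0$. Hence the set
\[
\mathcal{I}(K) := \{\mu\in \mathcal{P}(K):~T_s^*\mu=\mu \text{ for all } s\ge 0\}
\]
is nonempty. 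It is also convex and weakly closed, hence weakly compact.

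By the Krein--Milman theorem, $\mathcal{I}(K)$ has extreme points. A classical argument (identical to the one for invariant measures of a deterministic flow) identifies the extreme points of $\mathcal{I}(K)$ with the ergodic invariant measures of the semigroup: if $\mu$ is extreme and $A$ were an invariant Borel set with $0<\mu(A)<1$, the decomposition $\mu=\mu(A)\mu_A + \mu(A^c)\mu_{A^c}$ into conditional measures would contradict extremality. Pick therefore an extreme point $\mu\in \mathcal{I}(K)$; it is an ergodic invariant measure with $\mathrm{supp}(\mu)\subseteq K$. Finally, by the earlier proposition the support of any invariant measure is a nonempty closed saturated set, so $\mathrm{supp}(\mu)\subseteq K$ together with the minimality of $K$ forces $\mathrm{supp}(\mu)=K$.

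The only delicate point is the identification of extreme points of $\mathcal{I}(K)$ with ergodic measures in the Markov setting; all other ingredients (invariance of $K$, weak compactness of $\mathcal{P}(K)$, Krein--Milman, saturation of supports) are either immediate or already established in the paper.
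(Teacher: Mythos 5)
Your proof is correct and takes essentially the same route as the paper: produce an ergodic measure whose support lies in $K$ by a compactness argument (the paper compresses this into one sentence, while you spell out the invariance of $K$, Krylov--Bogolyubov, and the extreme-point/Krein--Milman extraction), and then let minimality of $K$ force $\mathrm{supp}(\mu)=K$. The only minor difference is that for the last step you invoke the proposition that supports of invariant measures are closed saturated sets, whereas the paper cites the lemma that supports of ergodic measures are transitive sets; both arguments yield $\mathrm{supp}(\mu)=K$ by minimality.
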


\begin{proof} By compactness there always exists an ergodic measure  $\mu$ with
support contained in
$K$. Lemma \ref{sop. medida ergodica} implies that $\mathrm{sup}(\mu)=K$.

\end{proof}



\subsection{Application to stable foliations:} Consider the foliation of $M$
given by a strongly stable diffeomorphism $\phi:M \rightarrow M$, i.e.  
the leaf 
through a point $x$ of $M$ is given by 
\[
L_x=\{y\in M,\:d(\phi^n(x),\phi^n(y))\rightarrow
0\hspace{.3cm}\textrm{as}\hspace{.3cm}n\rightarrow \infty\}.
\]
A diffeomorphism $\phi:M\rightarrow M$ is \textit{conservative} if, for all
nonempty measurable subset $A\subset M$, 
we have that 
\[
\phi^{-j}(A)\cap\phi^{-k}(A)\neq\emptyset
\] 
for all $j,k\in\mn\cup\{0\}$. A function $f$ on $M$ which is invariant by
$\phi$, i.e. $f=f\circ \phi$ is constant in the leaves $\nu$-a.s. for any
measure $\nu$ which is $\phi$-invariant, see Y. Coudene \cite{coudene}. We have
the following criteria to $\nu$-ergodicity 
of $\phi$:

 

\begin{proposition} \label{ergodicity}
Let $\phi:M \rightarrow M$ be a strongly stable conservative transformation
which preserves a
probability  measure $\nu$. If $ \nu$ is equivalent to an ergodic harmonic
probability measure $\mu$ (w.r.t. FoBM) then $\nu$ is
$\phi$-ergodic.
\end{proposition}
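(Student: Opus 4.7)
The plan is to reduce $\phi$-ergodicity to ergodicity of $\mu$ under the foliated Brownian semigroup, using that the leaves of $\mathcal{F}$ carry both the stable dynamics of $\phi$ (via Coudene's theorem) and the FoBM with drift. Let $A\subset M$ be a $\phi$-invariant Borel set; I want to show $\nu(A)\in\{0,1\}$. Set $f=\mathbf{1}_A$, which is a bounded $\phi$-invariant measurable function.

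First I would apply Coudene's result cited in the text: since $\phi$ is strongly stable, conservative, and preserves $\nu$, the function $f$ is constant on each leaf of $\mathcal{F}$ for $\nu$-almost every point. The equivalence $\nu\sim\mu$ transfers this statement to $\mu$: there is a saturated Borel set $M_0$ with $\mu(M_0)=1$ on which $f$ is leafwise constant. Next, because the drift $V$ and each gradient vector field $X_i$ in (\ref{grad-FoBm}) are sections of $E$, the solution $X_t$ of the SDE (\ref{grad-FoBm}) remains inside its starting leaf almost surely, so $f(X_t)=f(X_0)$ a.s.\ whenever $X_0\in M_0$. Consequently $T_t f = f$ for every $t\ge 0$, $\mu$-almost surely.

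It then remains to invoke ergodicity of $\mu$ as a harmonic measure for the FoBM semigroup: bounded measurable $T_t$-invariant functions are $\mu$-a.s.\ constant, so $\mu(A)\in\{0,1\}$, and by $\nu\sim\mu$ we conclude $\nu(A)\in\{0,1\}$, which is $\phi$-ergodicity of $\nu$. The main technical subtlety I anticipate lies in this last step: ergodicity of a harmonic measure is often stated for leafwise-smooth test functions, while the indicator $\mathbf{1}_A$ is only measurable. The standard remedy is to use compactness of $M$ to get density of $\bbl$ in $L^2(\mu)$, combined with the fact that $T_t$ is an $L^2(\mu)$-contraction; the mean ergodic theorem then places $f$ in the $T_t$-invariant subspace of $L^2(\mu)$, which under ergodicity of $\mu$ coincides with the constants.
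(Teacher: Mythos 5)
Your argument is correct and follows essentially the same route as the paper's proof: take $f=\mathbf{1}_A$ for a $\phi$-invariant set $A$, use Coudene's theorem to get leafwise constancy (hence $T_tf=f$, since the FoBM stays in its leaf), and then invoke ergodicity of $\mu$ together with the equivalence $\nu\sim\mu$ to conclude $f$ is a.s.\ constant. The only difference is cosmetic: the paper phrases the ergodicity step through the time average $f^*=\lim_{t\to\infty}\frac{1}{t}\int_0^tT_sf\,ds=\int_M f\,d\mu$ $\mu$-a.s., whereas you use the equivalent characterization that $T_t$-invariant bounded measurable functions are $\mu$-a.s.\ constant (with the $L^2$ mean ergodic theorem handling the fact that $\mathbf{1}_A$ is merely measurable), a point the paper glosses over.
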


\begin{proof}
Let $A\subset M$ be such that $\phi^{-1}(A) = A$, hence for $f= 1_A$ we have
that  
\[
 f^* = \lim_{t \rightarrow \infty}\frac{1}{t}\int_0^tT_sf(x)~ds=\int_M f\ d\mu \
\ \ \ \mu-a.s.
\]
On the other hand by the Coudene's result mentioned above \cite{coudene} we have
that $f$ is constant in the
leaves, hence $T_tf=f$, $\nu$-a.s. therefore $f^* =f$ $\nu$-a.s.. 

\end{proof}






\subsection{Lyapunov Exponents}

Let $\phi_t$ be the stochastic flow associated to the foliated Brownian motion
with drift of Equation (\ref{grad-FoBm}) and consider $\mu $ an ergodic
invariant probability measure in $M$.
The sum of the Lyapunov exponents $\lambda_\Sigma(x)$ at a point $x\in M$
including multiplicity is given by the limit
\[
 \lambda_\Sigma(x)=\lim_{t\rightarrow\infty}\frac{1}{t}\ln|\det(\phi_{t*}(x,))|
\]
which exists and is constant $\mP \times \mu$-almost surely for  $(\omega, x)\in
\Omega \times M$ according to (multiplicative) ergodic theorems for stochastic
flows. It\^o formulae for the logarithm of this determinant have been obtained
by various authors:

\begin{eqnarray*}
\ln(|\det(\phi_{t*}(x))|)&=&\sum_{i=1}^N\int_0^t\textrm{div}
(X_i)(\phi_s(x))~dB_s^i\\
&&+\int_0^t\left(\textrm{div}(V)+\frac{1}{2}\sum_{i=1}^NX_i\textrm{div}
(X_i)\right)(\phi_s(x))~ds.
\end{eqnarray*} 
Birkhoff's theorem in the skew-product flow (Furtenberg-Khasminskii type
argument) leads to the Baxendale's ergodic formula:
\begin{equation}\label{mean exponent}
 \lambda_\Sigma=\int_M \left(\textrm{div}(V)+\frac{1}{2}\sum_{i=1}
^NX_i\textrm{
div}(X_i)\right) d\mu\ \ \ \ \ \ \ \mP \times \mu-a.s..
\end{equation}
See e.g. Chapell \cite{chapell}, Arnold \cite{L_Arnold} and many references
therein.

We have the following expression which involves the geometry of the leaves for
these ergodic theorems.
 
\begin{theorem}\label{mean lyap exp fobm} Let $\mu$ be an ergodic probability
measure for a gradient foliated Brownian motion
with drift $V$.
Then the sum of the Lyapunov exponents is given by
\[
 \lambda_\Sigma=-\frac{1}{2}\int_M\left(||H||^2-\dive(2V-\kappa)+2g(\kappa,
V)\right)~d\mu.
\]

\end{theorem}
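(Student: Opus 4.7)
The plan is to start from Baxendale's ergodic formula (\ref{mean exponent}) and convert its integrand, which is written in terms of the ambient $M$-divergence, into the foliated geometric quantities $||H||^2$, $\dive(2V-\kappa)$ and $g(\kappa, V)$ by a pointwise algebraic identity. Since both the starting formula and the target are integrals against the same ergodic measure $\mu$, no further stochastic analysis is needed beyond rewriting the integrand.

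First I would apply the defining identity of the tension, $\textrm{div}(X) = \dive(X) - g(\kappa, X)$ for every $X\in\Gamma(E)$. Since the drift $V$ is a section of $E$ by definition of the gradient foliated Brownian motion, this immediately gives $\textrm{div}(V) = \dive(V) - g(\kappa, V)$. Applied to each gradient field $X_i$ and then differentiated in the direction $X_i$, the same relation yields
\[
\sum_{i=1}^N X_i\,\textrm{div}(X_i) \;=\; \sum_{i=1}^N X_i\,\dive(X_i)\;-\;\sum_{i=1}^N X_i\, g(\kappa, X_i).
\]
The first sum equals $-||H||^2$ by formula (\ref{formula 3}) in Corollary \ref{formulas_H_X_i}.

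For the second sum, the Leibniz rule gives $X_i\, g(\kappa, X_i) = g(\nabla_{X_i}\kappa, X_i) + g(\kappa, \nabla_{X_i} X_i)$. Since $X_i \in E$, pairing against $X_i$ selects only the $E$-component of a covariant derivative, so $g(\nabla_{X_i}\kappa, X_i) = g(\nabla^E_{X_i}\kappa, X_i)$ and likewise $g(\kappa, \nabla_{X_i} X_i) = g(\kappa, \nabla^E_{X_i} X_i)$. The contraction identity (\ref{ap1}) then converts the sum on $i = 1,\ldots,N$ into a trace over an orthonormal frame of $E$, which gives $\sum_i g(\nabla^E_{X_i}\kappa, X_i) = \dive(\kappa)$; while the identity $\sum_i \nabla^E_{X_i} X_i = 0$ proved inside Lemma \ref{ab}(c) kills the remaining piece. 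Hence $\sum_i X_i \textrm{div}(X_i) = -||H||^2 - \dive(\kappa)$.

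Substituting into (\ref{mean exponent}) reduces Baxendale's formula to
\[
\lambda_\Sigma = \int_M \left( \dive(V) - g(\kappa, V) - \tfrac{1}{2}||H||^2 - \tfrac{1}{2}\dive(\kappa) \right) d\mu,
\]
and collecting $\dive(V)$ and $\dive(\kappa)$ into $\tfrac{1}{2}\dive(2V-\kappa)$ gives exactly the claimed identity. The only real bookkeeping obstacle is keeping the ambient divergence $\textrm{div}$ and the foliated divergence $\dive$ cleanly separate throughout the manipulation and verifying carefully that each use of the contraction identity (\ref{ap1}) is applied to a tensor that lives on $E$; everything else is a direct application of results from Section \ref{folgeo}.
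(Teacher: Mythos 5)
Your proposal is correct and follows essentially the same route as the paper: start from Baxendale's formula (\ref{mean exponent}), use the defining relation $\textrm{div}(X)=\dive(X)-g(\kappa,X)$ for $V$ and the $X_i$, and apply formula (\ref{formula 3}) to produce $-||H||^2$. The only difference is that you spell out, via the Leibniz rule, Lemma \ref{ab}(b) and the identity $\sum_i\nabla^E_{X_i}X_i=0$, the step $\sum_i X_i\,g(\kappa,X_i)=\dive(\kappa)$ which the paper uses implicitly in its first displayed line.
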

\begin{proof} Using the formula of Corollary \ref{formulas_H_X_i} and the
definition of the tensor $\kappa $ we have that

\begin{eqnarray*}
 \lambda_\Sigma&=&\frac{1}{2}\int_M\left(2\,
\mathrm{div}(V)+\sum_{i=1}^NX_i\dive(X_i)-
\dive(\kappa)\right)~d\mu\\
&=&\frac{1}{2}\int_M\left(\sum_{i=1}^NX_i\dive(X_i)+\dive(2V-\kappa)-2g(\kappa,
V)\right)~d\mu\\
&=&-\frac{1}{2}\int_M\left(||H||^2-\dive(2V-\kappa)+2g(\kappa,V)\right)~d\mu.\\
\end{eqnarray*}

\end{proof}

\begin{corollary} If the ergodic measure $\mu$ is harmonic totally invariant
then the sum of the Lyapunov exponents depends only on the second fundamental
form of the leaves: 
\[
 \lambda_\Sigma=-\frac{1}{2}\int_M||H||^2~d\mu.
 \]
\end{corollary}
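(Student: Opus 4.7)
The plan is to apply the formula of Theorem \ref{mean lyap exp fobm} and simplify the two non-geometric terms to zero using the harmonic and totally invariant hypotheses in turn. One point to keep in mind throughout is that the macro $\dive$ in the theorem's statement denotes the foliated divergence $\textrm{div}_E$, so $\dive(2V-\kappa)$ is to be read as the foliated divergence of the section $2V-\kappa$ of $E$.

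First I would invoke the totally invariant hypothesis to kill the divergence term. By definition $\psi_\mu$ is a foliated cycle, and testing against $\chi_E$ (whose Lie derivative along $X\in\Gamma(E)$ is $\dive(X)\chi_E$) gives the integrated identity $\int_M \dive(X)\, d\mu = 0$ for every $X\in\Gamma(E)$. Both the drift $V$ and the tension $\kappa$ are sections of $E$, so $2V-\kappa\in\Gamma(E)$ and hence
\[
\int_M \dive(2V-\kappa)\, d\mu = 0.
\]

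Next I would invoke the harmonic hypothesis to kill the $g(\kappa,V)$ term. Harmonicity reads $\int_M \Delta_E f\, d\mu = 0$ for every $f\in\bbl$; subtracting this from the $\mathcal{L}$-invariance identity $\int_M \mathcal{L}f\, d\mu = 0$ (with $\mathcal{L} = V + \tfrac12 \Delta_E$, which is built into the hypothesis of Theorem \ref{mean lyap exp fobm}) gives $\int_M V(f)\, d\mu = 0$ for all $f$. Applying the product rule $V(f) = \dive(fV) - f\dive(V)$ (valid since $V\in\Gamma(E)$) and using the totally invariant hypothesis once more on $fV\in\Gamma(E)$ yields $\int_M f\dive(V)\, d\mu = 0$ for every $f$, hence $\dive(V) = 0$ on $\supp(\mu)$. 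Combining this pointwise identity with the defining relation $g(\kappa,V) = \dive(V) - \textrm{div}(V)$ and the integrated totally invariance identity applied to $V$ itself then forces $\int_M g(\kappa,V)\, d\mu = 0$.

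Substituting both simplifications into Theorem \ref{mean lyap exp fobm} immediately produces $\lambda_\Sigma = -\tfrac12\int_M\|H\|^2\, d\mu$. The main technical hurdle is the last step of the previous paragraph: the two hypotheses must be combined carefully to rule out any residual interaction between the drift $V$ and the tension $\kappa$, and in particular one has to be mindful that the identity $\dive(V)=0$ obtained from harmonicity is with respect to the \emph{foliated} divergence, whereas the Baxendale formula underlying Theorem \ref{mean lyap exp fobm} starts from the ambient Riemannian divergence; keeping these two divergences properly aligned is what makes the $g(\kappa,V)$ contribution cancel and leaves the sum of Lyapunov exponents depending only on the extrinsic geometry of the leaves in $\mathbf{R}^N$.
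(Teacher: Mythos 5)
There is a genuine gap, and it sits exactly at the point you flag as the ``main technical hurdle''. Your treatment of the divergence term is fine and matches the paper in substance: total invariance gives $\int_M \dive(X)\,d\mu=0$ for every $X\in\Gamma(E)$ (the paper simply cites \cite[Thm 4.3]{NA} for this), so $\int_M\dive(2V-\kappa)\,d\mu=0$. Your derivation of $\dive(V)=0$ on $\supp(\mu)$ from harmonicity plus invariance is also correct. But the final step for the cross term is circular. What you need is $\int_M g(\kappa,V)\,d\mu=0$, and by the definition of $\kappa$,
\[
\int_M g(\kappa,V)\,d\mu \;=\; \int_M \dive(V)\,d\mu \;-\; \int_M \textrm{div}(V)\,d\mu ,
\]
so, given $\dive(V)=0$ $\mu$-a.e., the claim is equivalent to $\int_M \textrm{div}(V)\,d\mu=0$ for the \emph{ambient} divergence. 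Total invariance ``applied to $V$ itself'' only controls the foliated divergence $\dive(V)$ (which you already know vanishes), and says nothing about $\textrm{div}(V)$ integrated against $\mu$; the discrepancy between the two integrals is precisely $-\int_M g(\kappa,V)\,d\mu$, i.e.\ the quantity you are trying to kill. So the asserted cancellation is assumed rather than proved, and nothing in the hypotheses (invariance for the drifted generator, harmonicity, total invariance) obviously forces it for a general section $V$ of $E$.

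The paper sidesteps this entirely: its proof reads ``With $V=0$, use that $\int_M\dive X\,d\mu=0$ for all $X\in\Gamma(E)$.'' That is, the corollary is interpreted for the driftless gradient foliated Brownian motion, where harmonic measures are exactly the invariant ones; then the term $2g(\kappa,V)$ is absent, $\dive(2V-\kappa)=-\dive(\kappa)$, and total invariance applied to $\kappa\in\Gamma(E)$ finishes the computation. If you want to keep a nonzero drift you must either add a hypothesis that makes $\int_M g(\kappa,V)\,d\mu$ vanish (for instance $V$ orthogonal to $\kappa$ $\mu$-a.e., or $\kappa=0$), or supply a genuinely new argument controlling $\int_M\textrm{div}(V)\,d\mu$; your current proof does not do this.
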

\begin{proof} With $V=0$, use that 
$ \int_M \dive X ~d\mu =0$, for all $X \in \Gamma (E)$ see \cite[Thm 4.3]{NA}.

\end{proof}


\begin{thebibliography}{99}

\bibitem{L_Arnold}  L. Arnold. {\it Random Dynamical Systems}.
Springer-Verlag,
1998.

\bibitem{baxendale} P. H. Baxendale. \textit{Lyapunov Exponents and relative
Entropy for Stochastic Flows of Diffeomorphisms} Prob. Theory an Rel. Fields,
(1989) Nro. 81, pp. 521-554.

\bibitem{NA} P. J. Catuogno, D. S. Ledesma and P. R. Ruffino. \textit{Foliated
stochastic
calculus: Harmonic measures} arXiv:1012.4450.

\bibitem{candel} A. Candel. \textit{The Harmonic measures of Lucy Garnett}.
Advances in Mathematics. Volume 176, Issue 2, 25 (2003), pp. 187–247

\bibitem{candel1} A. Candel and L. Conlon. \textit{Foliations I}
Graduate Studies in Mathematics, 23 (2000).

\bibitem{candel2} A. Candel and L. Conlon. \textit{Foliations II} Graduate
Studies in Mathematics, 60. (2003).


\bibitem{cass} D. M. Cass. \textit{Minimal Leaves in Foliations} Transactions of
the American Mathematical
Society, Vol. 287, No. 1 (1985), pp. 201-213.


\bibitem{chapell} M. J. Chapell. \textit{Bounds for average Lyapunov exponents
of gradient stochastic systems} 
Lecture Notes in Math., 1186, Springer, Berlin (1986). pp. 308-321.


\bibitem{coudene} Y. Coudene. \textit{The Hopf Argument} Journal of Modern
Dynamics, Vol. 1, No.1 (2007) pp. 147-153



\bibitem{elworthy}   K. D. Elworthy. \textit{Stochastic Differential Equations
on Manifolds}.
London Math. Society (Lecture Notes Series 70) Cambridge
University Press 1982.



\bibitem{LG} L. Garnett.  \textit{Foliations, The ergodic theorem and Brownian
motion}. Journal of Functional Analysis
51, (1983) pp. 285-311.



\bibitem{godbillon} C. Godbillon and J. Vey: \textit{ Un invariant des feuilletages
de codimension 1}. (French) C. R. Acad. Sci. Paris Sér. A-B 273 1971 A92-A95. 




\bibitem{klieman} W. Kliemann. \textit{Recurrence and invariant measures for
degenerate diffusions.} Ann. Probab. 15 (1987), no. 2, 690–707. 


\bibitem{kunita}  H. Kunita. \textit{Stochastic  flows and stochastic
differential equations}.
Cambridge Univ. Press. (1997).





\bibitem{moerdijk} I. Moerdijk and  J. Mrcun. \textit{ Introduction to foliations and
Lie groupoids.} Cambridge Studies in Advanced Mathematics, 91. Cambridge
University Press, Cambridge, 2003. 




\bibitem{plante} J. F. Plante. \textit{Foliations with measures preserving
holonomy.} Ann. of Math. 102  (1975), no. 2,
pp. 327-361.




\bibitem{Sullivan} D. Sullivan.  Cycles for the dynamical study of foliated
manifolds and complex manifolds. \textit{Invent. Math.} 36, 225-265 (1976). 

\bibitem{Tondeur} P. Tondeur. \textit{Foliations on Riemannian manifolds}.
Universitext, Springer Verlag, Berlin-Heidelberg-New York, 1988.



\bibitem{walczak} P. Walczak. \textit{ Dynamics of foliations, groups and
pseudogroups}. Mathematics Institute of the Polish Academy of Sciences.
Mathematical Monographs (New Series), 64. Birkhäuser Verlag, Basel, 2004.




\end{thebibliography}
\end{document}